\newtheorem{Thm}{Theorem}
\newtheorem*{Thm*}{Theorem}
\newtheorem{cor}{Corollary}
\global\long\def\epsilon{\varepsilon}
\begin{document}
\date{06 July, 2017}

\title{The signs of the Stieltjes constants associated with the Dedekind zeta function }
\author{Sumaia Saad Eddin}

\maketitle
{\def\thefootnote{}
\footnote{{\it Mathematics Subject Classification (2000)}. 11R42; 11M06}}

\begin{abstract}
The Stieltjes constants $\gamma_n(K)$ of a number field $K$ are the coefficients of the Laurent expansion of the Dedekind zeta function $\zeta_K(s)$ at its pole $s=1$. In this paper, we establish a similar expression of  $\gamma_n(K)$ as Stieltjes obtained in 1885 for $\gamma_n(\mathbb Q)$. We also study the signs of $\gamma_n(K)$.  
\end{abstract}

\section{Introduction}
For a number field $K$, the 
\emph{Dedekind zeta function} is defined 
$$
\zeta_K(s)=\sum_{\mathfrak{a}} \frac{1}{N{\mathfrak{a}}^{s}}
=\prod_{\mathfrak{p}}\frac{1}{1-N{\mathfrak{p}}^{-s}}, \quad \Re(s)>1.
$$
Here, $\mathfrak{a}$ runs over non-zero ideals in $\mathcal{O}_K$, the ring of integers of $K$,
$\mathfrak{p}$ runs over the prime ideals in $\mathcal{O}_K$ and $N{\mathfrak{a}}$ is the norm
of $\mathfrak{a}$.
It is known that $\zeta_K(s)$ can be analytically continued to $\mathbb{C} - \{1\}$,
and that at $s=1$ it has a simple pole, with residue $\gamma_{-1}(K)$ given by 
$$ \gamma_{-1}(K)= \frac{2^{r_1}(2\pi)^{r_2}h(K)R(K)}{\omega(K)\sqrt{|d(K)|}},$$
 where $r_1$ denotes the number of real embeddings of $K$, $r_2$ is the number of complex embeddings of $K$, $h(K)$  is the class number of $K$, $R(K)$ is the regulator of $K$, $\omega(K)$ is the number of roots of unity contained in $K$ and $d(K)$ is the discriminant of the extension $K/\mathbb{Q}$. Further, the Laurent expansion of $\zeta_K(s)$ at $s=1$ is 
\begin{equation}
\label{eq1}
\zeta_K(s)=
\frac{\gamma_{-1}(K)}{s-1}+\sum_{n\geq 0}\gamma_n(K)(s-1)^n.
\end{equation}
The coefficients  $\gamma_n(K)$ are sometimes called the Stieltjes constants associated with the Dedekind zeta function.  In \cite{HA}, they are called by higher Euler's constants of $K$. While
the constant $\gamma_K=\gamma_0(K)/\gamma_{-1}(K)$ is called the {\it Euler-Kronecker constant} in
Ihara \cite{Ihara} and Tsfasman \cite{Tsfasman}.\\

In case $K=\mathbb{Q}$, the Laurent expansion of the Riemann zeta function $\zeta(s)$ at its pole $s=1$ is given by \begin{equation*}
\zeta(s)=\frac{1}{s-1}+\sum_{n\geq 0}\gamma_n(s-1)^n,
\end{equation*}
where
\begin{equation}
\label{eq01}
\gamma_n= \frac{(-1)^n}{n!}\lim\limits_{x\rightarrow \infty}\left(\sum\limits_{m=1}^{x}\frac{(\log m)^n }{m}-\frac{(\log x)^{n+1}}{(n+1)}\right).
\end{equation} 
Stieltjes in 1885 was the first to propose this definition of $\gamma_n$, for this reason these constants are called today by his name. 
 The asymptotic behaviour of $\gamma_n$, as $n\rightarrow \infty$ has been widely studied by many authors ( for instance: Briggs~\cite{B}, Mitrovi\`c~\cite{MI} , Israilov~\cite{I1}, Matsuoka~\cite{M} and more recently Coffey~\cite{C} and \cite{C1}, Knessl and Coffey~\cite{K-C}, Adell~\cite{A}, Adell and Lekuona~\cite{A1} and Saad Eddin~\cite{S}). Their main interest is focused on the growth, the sign changes of the sequence $(\gamma_n)$ and on giving explicit upper estimates for $|\gamma_n|$. Moreover, they obtained relations between this sequence and the zeros of $\zeta (s)$ ( see \cite{M}, \cite{S1}). \\

In this paper we are interested in the Stieltjes coefficients $ \gamma_n(K)$ for the Dedekind zeta function. We first give the following formula of $ \gamma_n(K)$ which is similar to Stieltjes's formula given by Eq~\eqref{eq01}. 
\begin{Thm}
\label{thm1}
For any $n\geq 1$, we have 
\begin{equation*}
\gamma_n(K)=
\frac{(-1)^n}{n!}\lim_{x \rightarrow \infty} \left( \sum_{N{\mathfrak{a}}\leq x} \frac{(\log N{\mathfrak{a})^n}}{N{\mathfrak{a}}}-\gamma_{-1}(K)\frac{(\log x)^{n+1}}{n+1}\right),
\end{equation*}
and 
\begin{equation*}
\gamma_0(K)=\lim_{x \rightarrow \infty} \left( \sum_{N{\mathfrak{a}}\leq x} \frac{1}{N{\mathfrak{a}}}-\gamma_{-1}(K)\log x\right)+\gamma_{-1}(K).
\end{equation*}
\end{Thm}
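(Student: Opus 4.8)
The plan is to combine two ingredients: the classical asymptotic for the ideal-counting function and the description of the Laurent coefficients through the derivatives of $\zeta_K$ at $s=1$. Write $A(x)=\#\{\mathfrak a: N\mathfrak a\le x\}$. The theorem of Weber and Landau gives $A(x)=\gamma_{-1}(K)\,x+E(x)$ with a power saving $E(x)=O(x^{1-\delta})$ for some $\delta=\delta(K)>0$; this simultaneously identifies $\gamma_{-1}(K)$ as the residue and supplies the error bound I shall lean on. On the analytic side, differentiating the Laurent expansion \eqref{eq1} $n$ times and using $\zeta_K^{(n)}(s)=(-1)^n\sum_{\mathfrak a}(\log N\mathfrak a)^n N\mathfrak a^{-s}$ for $\Re(s)>1$ gives, for every $n\ge 1$,
\[
\lim_{s\to 1^+}\left(\sum_{\mathfrak a}\frac{(\log N\mathfrak a)^n}{N\mathfrak a^{s}}-\gamma_{-1}(K)\frac{n!}{(s-1)^{n+1}}\right)=(-1)^n\,n!\,\gamma_n(K).
\]
It therefore suffices to show that the $x\to\infty$ limit in the statement equals this same quantity, after clearing the factor $(-1)^n/n!$.

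For the summatory side I fix $n\ge 1$, put $S_n(x)=\sum_{N\mathfrak a\le x}(\log N\mathfrak a)^n/N\mathfrak a$, and apply partial summation, so that $S_n(x)=\int_{1^-}^{x}\varphi(t)\,dA(t)$ with $\varphi(t)=(\log t)^n/t$. Splitting $dA(t)=\gamma_{-1}(K)\,dt+dE(t)$, the first piece integrates to exactly $\gamma_{-1}(K)(\log x)^{n+1}/(n+1)$, which is precisely the term subtracted in the theorem. For the second piece I integrate by parts once more: the boundary terms vanish, at $t=1$ because $\varphi(1)=0$ (this is where $n\ge 1$ enters) and at $t=\infty$ because $\varphi(x)E(x)\ll (\log x)^n x^{-\delta}\to 0$, leaving $-\int_1^{x}E(t)\varphi'(t)\,dt$. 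Since $\varphi'(t)\ll (\log t)^n t^{-2}$ and $E(t)=O(t^{1-\delta})$, the integrand is $O\!\left((\log t)^n t^{-1-\delta}\right)$, the improper integral converges absolutely, and $\lim_{x\to\infty}\bigl(S_n(x)-\gamma_{-1}(K)(\log x)^{n+1}/(n+1)\bigr)=-\int_1^{\infty}E(t)\varphi'(t)\,dt$.

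It remains to match this constant with the analytic limit. Running the identical partial summation on the Dirichlet series, now with weight $\varphi_s(t)=(\log t)^n t^{-s}$, I find that the $\gamma_{-1}(K)\,dt$ piece produces exactly $\gamma_{-1}(K)\,n!/(s-1)^{n+1}$, using $\int_1^{\infty}(\log t)^n t^{-s}\,dt=n!/(s-1)^{n+1}$, while the $dE(t)$ piece gives $-\int_1^{\infty}E(t)\varphi_s'(t)\,dt$. Letting $s\to 1^+$ and dominating $\varphi_s'$ by an $s$-uniform integrable majorant of size $(\log t)^n t^{-1-\delta/2}$ on $[1,\infty)$ permits passing the limit inside the integral, so the analytic limit also equals $-\int_1^{\infty}E(t)\varphi'(t)\,dt$. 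Comparing the two computations proves the theorem for $n\ge 1$.

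The step I expect to be the main obstacle is precisely this Abelian interchange of $\lim_{s\to 1^+}$ with the improper integral: it is here, and only here, that a genuine power saving $E(x)=O(x^{1-\delta})$ (rather than merely $o(x)$) is indispensable, since it is what makes $\int^{\infty}E(t)\varphi_s'(t)\,dt$ converge absolutely and uniformly in $s$ near $1$. Finally, the case $n=0$ is handled by the same partial-summation computation, the sole difference being that the weight $t^{-1}$ no longer vanishes at $t=1$; the boundary contribution from the upper endpoint is then $A(x)/x=\gamma_{-1}(K)+o(1)$, and tracking this together with the unit-ideal term $N\mathfrak a=1$ is what produces the term $\gamma_{-1}(K)$ recorded in the second displayed formula.
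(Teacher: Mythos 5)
Your argument for $n\ge 1$ is correct, and it rests on the same two ingredients as the paper's proof: the counting asymptotic $N_K(x)=\gamma_{-1}(K)x+E(x)$ with a power saving (Landau \cite[Satz 210]{LAN} gives $E(x)=\mathcal{O}(x^{1-1/m})$, $m=[K:\mathbb{Q}]$), and Riemann--Stieltjes partial summation against the weight $\varphi(t)=(\log t)^n/t$. The execution differs in one respect: the paper writes $\zeta_K(s)-\gamma_{-1}(K)s/(s-1)=s\int_{1^-}^{\infty}\bigl(N_K(t)-\gamma_{-1}(K)t\bigr)t^{-s-1}\,dt$, observes that the right-hand side is analytic for $\Re s>1-1/m$, and differentiates $n$ times at $s=1$; you never cross the line $\Re s=1$, and instead match the $x\to\infty$ limit with the Abelian limit $s\to 1^+$ through the common value $-\int_1^\infty E(t)\varphi'(t)\,dt$, justified by dominated convergence. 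Your organization is in fact a bit cleaner: the paper's intermediate integrals $I_1$ and $I_2$ diverge separately and only make sense as truncations at a common point $x$, a subtlety your decomposition never meets.

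The genuine gap is the case $n=0$, and it cannot be closed, because the second display of Theorem \ref{thm1} is false as printed. Run your own scheme at $n=0$: on the summatory side, $\sum_{N\mathfrak{a}\le x}1/N\mathfrak{a}-\gamma_{-1}(K)\log x=\gamma_{-1}(K)+o(1)+\int_1^x E(t)t^{-2}\,dt$, the constant being exactly your boundary term $A(x)/x\to\gamma_{-1}(K)$; on the analytic side, $\zeta_K(s)-\gamma_{-1}(K)/(s-1)=\gamma_{-1}(K)+s\int_1^\infty E(t)t^{-s-1}\,dt$, the constant now coming from $s\gamma_{-1}(K)/(s-1)-\gamma_{-1}(K)/(s-1)$. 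Equating the two limits, the two copies of $\gamma_{-1}(K)$ cancel, and you obtain $\gamma_0(K)=\lim_{x\to\infty}\bigl(\sum_{N\mathfrak{a}\le x}1/N\mathfrak{a}-\gamma_{-1}(K)\log x\bigr)$ with no additional term; your closing assertion that the upper boundary term and the unit ideal ``produce'' the extra $\gamma_{-1}(K)$ is precisely the step that fails. A sanity check makes the falsity of the printed formula plain: for $K=\mathbb{Q}$ it would give $\gamma_0(\mathbb{Q})=\gamma+1$, whereas $\gamma_0(\mathbb{Q})$ is Euler's constant $\gamma$. (The paper's own slip is in Eq.~\eqref{eq6}, which relies on $\int_1^x n(\log t)^{n-1}t^{-1}\,dt=(\log x)^n$; this holds for $n\ge 1$, but for $n=0$ the integral is $0$, not $1$, and the shortfall inflates $\gamma_0(K)$ by exactly $\gamma_{-1}(K)$.) Carried out honestly, your method proves the corrected statement, not the printed one.
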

This result seems similar to another result  obtained by Hashimoto et al~\cite{HA} for the higher Euler-Selberg constants. Despite a considerable effort the author have not been able to find Theorem~\ref{thm1} in the literature. \\

In 1962, Mitrovi\`c~\cite{MI}  studied the sign changes of the constants $\gamma_n$ and prove that; Each of the inequalities 
$$ \gamma_{2n}>0, \ \gamma_{2n}<0, \ \gamma_{2n-1}>0, \ \gamma_{2n-1}<0,$$ 
holds for infinitely many $n$. In \cite{M}, Matsuoka gave precise conditions for the sign of $\gamma_n$. By the same techniques used in~\cite{MI}, we prove that  
\begin{Thm}
\label{Thm2}
For the coefficients in the expansion \eqref{eq1}, each of the inequalities 
$$ \gamma_{2n}(K)>0, \ \gamma_{2n}(K)<0, \ \gamma_{2n-1}(K)>0, \ \gamma_{2n-1}(K)<0,$$ 
holds for infinitely many $n$.  
\end{Thm}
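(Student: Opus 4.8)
The plan is to follow Mitrovi\`c's strategy: produce an integral representation of $\gamma_n(K)$ in which a single rapidly oscillating factor dominates as $n\to\infty$, extract its leading asymptotics, and read off the sign changes. Concretely, I would first convert the formula of Theorem~\ref{thm1} into a Riemann--Stieltjes integral against the ideal-counting function $A_K(x)=\sum_{N\mathfrak a\le x}1$. Writing $A_K(x)=\gamma_{-1}(K)\,x+E_K(x)$, Landau's theorem gives $E_K(x)=O\!\left(x^{1-1/d}\right)$ with $d=[K:\mathbb Q]$; the main term contributes exactly $\gamma_{-1}(K)\frac{(\log x)^{n+1}}{n+1}$, which cancels the subtracted term in Theorem~\ref{thm1}. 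One integration by parts (the boundary contributions vanish since $E_K(x)=O(x^{1-1/d})$ and $(\log 1)^n=0$) then leaves, for $n\ge 1$,
\begin{equation*}
\gamma_n(K)=\frac{(-1)^{n+1}}{n!}\int_1^\infty E_K(t)\,\frac{(\log t)^{n-1}(n-\log t)}{t^2}\,dt .
\end{equation*}
This is the exact analogue of the representation used for $\mathbb Q$, where $E_{\mathbb Q}(t)=-\{t\}$ and one substitutes the Fourier series of $\{t\}-\tfrac12$.

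The heart of the matter is the oscillatory content of $E_K$. For a general number field there is no elementary Fourier expansion of $E_K$; its oscillation is instead governed by the functional equation of $\zeta_K$, that is, by a Vorono\"i-type (explicit-formula) expansion whose frequencies and phases are controlled by the archimedean factors $\Gamma(s/2)^{r_1}\Gamma(s)^{r_2}$ and by $d(K)$. After the substitution $t=e^u$, the weight $\frac{(\log t)^{n-1}(n-\log t)}{t^2}$ becomes a Gamma-type envelope concentrating near $\log t\approx n/2$, and a saddle-point evaluation of the resulting oscillatory integral should yield an asymptotic of the form
\begin{equation*}
\gamma_n(K)=R_n\cos(\beta n+\phi)\bigl(1+o(1)\bigr),\qquad R_n>0,
\end{equation*}
with a frequency $\beta\in(0,\pi)$ and phase $\phi$ depending on $(r_1,r_2)$ and $d(K)$; this is the Dedekind analogue of the Knessl--Coffey asymptotics in the classical case.

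Granting such an asymptotic, the four inequalities are immediate. Because $\beta\not\equiv 0\pmod\pi$, the sequence $\cos(2\beta m+\phi)$ is positive for infinitely many $m$ and negative for infinitely many $m$, so $\gamma_{2m}(K)$ takes both signs infinitely often; applying the same reasoning to $\cos(2\beta m+\phi-\beta)$ handles $\gamma_{2m-1}(K)$.

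I expect the decisive work to lie in the two inputs to this asymptotic: (i) making the Vorono\"i/explicit-formula expansion of $E_K$ and the ensuing saddle-point estimate precise enough to control the \emph{sign}, not merely the size, of $\gamma_n(K)$ uniformly in $n$; and (ii) checking that $\beta\notin\pi\mathbb Z$, which is precisely what lets the even- and odd-indexed subsequences oscillate separately. One cannot avoid this by evaluating the representation on the real axis: for $\sigma\to-\infty$ the values $\zeta_K(\sigma)$ need not change sign --- their pattern depends on the parities of $r_1$ and $r_2$ through the orders of the trivial zeros --- so the genuinely complex oscillation coming from the functional equation is indispensable.
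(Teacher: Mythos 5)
Your integral representation is correct (it does follow from Theorem~\ref{thm1} by partial summation and Landau's bound), but everything after it is a program, not a proof. The entire theorem is made to rest on an asymptotic $\gamma_n(K)=R_n\cos(\beta n+\phi)\bigl(1+o(1)\bigr)$ that you never establish: the Vorono\"i/explicit-formula expansion of $E_K$, the saddle-point evaluation with control of the \emph{sign} of the result, and the non-degeneracy condition $\beta\notin\pi\mathbb{Z}$ are exactly the points you yourself flag as ``the decisive work,'' and none of them is carried out. This is not a small technical debt. Even in the case $K=\mathbb{Q}$ the analogous asymptotics of Knessl and Coffey \cite{K-C} required serious effort, and there the oscillation is not governed by a fixed frequency $\beta$ --- the phase parameters drift with $n$ --- so even the \emph{shape} of the asymptotic you are granting yourself is doubtful; for a general number field nothing of this kind is available. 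As submitted, the argument proves nothing beyond the (correct) reduction in its first display.

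The deeper misjudgment is your closing claim that real-axis evaluation cannot work and that ``the genuinely complex oscillation coming from the functional equation is indispensable.'' The paper's proof (which is what Mitrovi\`c \cite{MI} actually does --- your opening paragraph attributes to him a saddle-point strategy that is really in the style of Matsuoka and Knessl--Coffey) is a purely real-variable, elementary argument, and it needs no sign information about $\zeta_K$ on the negative axis, only its \emph{vanishing} there. One splits $h(s)=\zeta_K(s)-\gamma_{-1}(K)/(s-1)$ into its even and odd parts about $s=1$, obtaining \eqref{eq7} and \eqref{eq8}, and evaluates at $t=2m+1$. Since $\zeta_K(-2m)=0$ (trivial zeros at the negative even integers) and $\zeta_K(\sigma)\to 1$ as $\sigma\to+\infty$, the left-hand sides tend to $1$. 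If, say, only finitely many even-index $\gamma_n(K)$ were negative, then the right-hand side of \eqref{eq7}, a series in which all but finitely many terms are positive at $t=2m+1$, would tend to $+\infty$ (it cannot be a polynomial, since by Reich's theorem \cite{REI} the function $h$ is entire and transcendental), contradicting the limit $1$; the other three cases are symmetric. That argument is complete, uses only the trivial zeros and Reich's theorem, and shows that the heavy analytic machinery you propose --- quite apart from being unproven --- is unnecessary for the qualitative statement of Theorem~\ref{Thm2}.
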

It immediately follows that 
\begin{cor}
Infinitely many $\gamma_n(K)$ are positive and infinitely many are negative. 
\end{cor}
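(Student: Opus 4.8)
The plan is to reduce everything to the integral formula of Theorem~\ref{thm1} and then to follow Mitrovi\`c's oscillation argument, the new difficulty being that the summatory function of the ideals is no longer the elementary floor function. Write $I_n:=(-1)^n n!\,\gamma_n(K)$, so that $\gamma_n(K)$ and $(-1)^nI_n$ share their sign; the four assertions are therefore equivalent to the single statement that $I_n>0$ for infinitely many even $n$ and for infinitely many odd $n$, and likewise $I_n<0$ for infinitely many $n$ of each parity. First I would rewrite the bracket in Theorem~\ref{thm1} by partial summation against the ideal counting function $A(t)=\#\{\mathfrak a:N\mathfrak a\le t\}$. Landau's theorem gives $A(t)=\gamma_{-1}(K)\,t+\Delta(t)$ with a power saving $\Delta(t)=O\!\left(t^{\,1-2/(d+1)}\right)$, $d=[K:\mathbb Q]$; the linear main term reproduces exactly $\gamma_{-1}(K)(\log x)^{n+1}/(n+1)$ and cancels, while the boundary contributions vanish because $(\log 1)^n=0$ and $\Delta(t)(\log t)^n/t\to0$. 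For $n\ge1$ this leaves the clean representation
\begin{equation*}
I_n=-\int_1^\infty \Delta(t)\,\frac{(\log t)^{n-1}\,(n-\log t)}{t^{2}}\,dt ,
\end{equation*}
which for $K=\mathbb Q$ specialises to Mitrovi\`c's Euler--Maclaurin identity, since there $\Delta(t)=-\{t\}$.

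The core of the argument is to extract an honest oscillation in $n$ from this integral. For $\mathbb Q$ one inserts the Fourier series $\{t\}-\tfrac12=-\sum_{k\ge1}\sin(2\pi kt)/(\pi k)$ and is led, after one integration by parts, to cosine integrals $\int_1^\infty\cos(2\pi kt)(\log t)^{n}t^{-1}\,dt$; in the general case the role of the sawtooth is played by the Vorono\"i-type (explicit-formula) expansion of $\Delta(t)$ coming from the functional equation of $\zeta_K$, whose leading terms oscillate like $t^{\beta}\cos(\kappa\,t^{1/d}+\phi)$. I would then substitute $t=e^{u}$ and evaluate the resulting integrals $\int_0^\infty u^{n}e^{(\cdots)u}\cos(\mathrm{phase}(u))\,du$ by the saddle-point method, the relevant saddle being complex, obtaining an asymptotic of the shape
\begin{equation*}
I_n\ \sim\ C\,\rho^{\,n}\,n^{\alpha}\,\cos(\beta n+\theta)\qquad(n\to\infty),
\end{equation*}
with $\rho>0$ and $\beta$ not a rational multiple of $\pi$. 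Since $\beta n\bmod 2\pi$ equidistributes even when $n$ is restricted to a fixed parity, the factor $\cos(\beta n+\theta)$ is positive for infinitely many $n$ of each parity and negative for infinitely many $n$ of each parity; feeding this back through $\gamma_n(K)=(-1)^nI_n/n!$ yields all four inequalities, and the corollary follows at once.

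The hard part is making this second step rigorous: controlling the tail of the Vorono\"i/Fourier expansion uniformly in $n$ and justifying the saddle-point asymptotics with explicit constants $\rho,\alpha,\beta,\theta$. I would stress one structural point that also explains why no softer argument is available in general. One might hope to bypass the asymptotics by testing the entire generating function $\sum_{n\ge0}\gamma_n(K)w^n=\zeta_K(1+w)-\gamma_{-1}(K)/w$ along the positive real axis and reading off the signs of its even and odd parts from the behaviour of $\zeta_K(1-w)$ as $w\to+\infty$; because a power series in $w^{2}$ with eventually one-signed coefficients cannot change sign for large $w$, this succeeds whenever $\zeta_K(\sigma)$ changes sign infinitely often as $\sigma\to-\infty$. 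However, the functional equation shows that the sign of $\zeta_K(\sigma)$ on $(-\infty,0)$ is eventually constant precisely when $r_1$ and $r_2$ are both even (for instance for every real quadratic field), so for such fields the genuine oscillation is invisible on the real axis and must be located in the complex phase of the saddle-point integral. This is exactly the point at which extending Mitrovi\`c's method beyond $\mathbb Q$ requires real work.
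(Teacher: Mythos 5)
Your opening reduction is sound and agrees with what the paper's proof of Theorem~\ref{thm1} actually produces: for $n\ge 1$, writing $\Delta(t)=N_K(t)-\gamma_{-1}(K)t$, one indeed gets
\begin{equation*}
(-1)^n n!\,\gamma_n(K)=-\int_1^{\infty}\Delta(t)\,\frac{(\log t)^{n-1}\,(n-\log t)}{t^{2}}\,dt .
\end{equation*}
But from that point on your proposal is a research programme, not a proof, and you say so yourself. The entire conclusion hangs on the asymptotic $I_n\sim C\rho^{\,n}n^{\alpha}\cos(\beta n+\theta)$, which is never established: you do not construct (or cite) a Vorono\"i-type expansion of $\Delta(t)$ for a general number field, you do not control its tail uniformly in $n$ against the kernel $(\log t)^{n-1}(n-\log t)t^{-2}$, you do not carry out the saddle-point analysis, and you do not prove that $\beta$ is not a rational multiple of $\pi$. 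That last point is not a technicality: if, say, $\beta=\pi$, then $\cos(\beta n+\theta)$ is constant on each parity class and all four sign conclusions collapse. Even for $K=\mathbb{Q}$ an asymptotic of this shape is the Knessl--Coffey formula \cite{K-C}, a substantial piece of analysis in its own right; nothing comparable exists for general $\zeta_K$. So none of the four inequalities, and hence not the corollary, is actually proved by what you have written.

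The deeper issue is the structural claim you use to argue that such heavy machinery is unavoidable; it is false, and seeing why explains how the paper proves the result. The paper's corollary is immediate from Theorem~\ref{Thm2}, which is proved by Mitrovi\`c's argument using only three soft facts: (i) $\zeta_K(-2m)=0$ for every $m\ge 1$ (true for \emph{every} number field, with multiplicity $r_1+r_2$; what is needed is \emph{vanishing}, not a sign change); (ii) $\zeta_K(\sigma)\to 1$ as $\sigma\to+\infty$; and (iii) Reich's theorem \cite{REI} that $\zeta_K(s)-\gamma_{-1}(K)/(s-1)$ is entire and transcendental. Splitting the Laurent series at $s=1$ into even and odd parts gives, for example, $\zeta_K(1+t)+\zeta_K(1-t)=2\sum_{n\ \mathrm{even}}\gamma_n(K)\,t^{n}$, and along $t=2m+1$ the left-hand side tends to $1$ by (i) and (ii). If one of the two even sign classes were finite, the right-hand side would either be a polynomial, which the limit forces to be constant and which then contradicts the behaviour of $\zeta_K$ guaranteed by (iii) and the functional equation, or else, since infinitely many coefficients of the surviving sign remain and dominate for large real $t$, it would tend to $\pm\infty$; either way a contradiction, and the odd classes are handled identically from the odd part. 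This works uniformly for all $K$, real quadratic fields included: your dichotomy conflates sign changes of $\zeta_K$ on the negative axis with vanishing at the negative even integers, and only the latter is required. In short, your route is incomplete at its one essential step, and the elementary real-axis argument you dismiss is precisely the proof the paper gives.
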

\section{Proofs}
\begin{proof}[Proof of Theorem \ref{thm1}]
By Eq~\eqref{eq1}, we note that 
\begin{eqnarray}
\label{eq2}
\zeta_K(s)-\frac{\gamma_{-1}(K)s}{s-1}&=&\zeta_K(s) -\frac{\gamma_{-1}(K)}{s-1} -\gamma_{-1}(K) \nonumber
\\&=&
\sum_{n\geq 0} \alpha_{n}(K)(s-1)^n, 
\end{eqnarray}
where $\alpha_0(K)=\gamma_{0}(K)-\gamma_{-1}(K)$ and $\alpha_{n}(K)=\gamma_{n}(K)$ for $n\geq 1$. By the definition of $\zeta_K(s)$, we write 
\begin{equation*}
\zeta_K(s)=\int_{1^-}^{+\infty} \frac{dN_K(t)}{t^s}=s\int_{1^-}^{+\infty} \frac{N_K(t)}{t^{s+1}}\, dt, 
\end{equation*}
where $$ N_K(t)=\sum_{N{\mathfrak{a}}\leq t}1.$$
Then, we get 
\begin{equation}
\label{eq3}
\zeta_K(s)-\frac{\gamma_{-1}(K)s}{s-1}=s \int_{1^-}^{+\infty} \frac{N_K(t)-\gamma_{-1}(K)t}{t^{s+1}}\, dt.
\end{equation}
Put $ \sum_{n\geq 0} \alpha_{n}(K)(s-1)^n=h(s)$. From Eq\eqref{eq2} and Eq~\eqref{eq3}, we have  
\begin{equation*}
h(s)=s \int_{1^-}^{+\infty} \frac{N_K(t)-\gamma_{-1}(K)t}{t^{s+1}}\, dt.
\end{equation*}
From \cite[Satz 210]{LAN}, we have $ N_K(t)=\gamma_{-1}(K)t+\mathcal{O}\left( t^{1-1/m}\right)$, where $m$ is the degree of $K$ and $\mathbb{Q}$. For $\Re{s}>1-1/m$, it is easily seen that the $n$-th derivative of $h(s)$ at $s=1$ is 
\begin{equation}
\label{eq4}
h^{(n)}(1)=n! \alpha_{n}(K)= (-1)^n(I_1-I_2),
\end{equation}
where
\begin{equation*}
I_1=\int_{1^-}^{+\infty}  N_K(t)\left(\frac{\log^n t-n(\log t)^{n-1}}{t^{2}}\right) \, dt,
\end{equation*}
and
\begin{equation*}
I_2=\gamma_{-1}(K)\int_{1^-}^{+\infty} \frac{\log^n t-n(\log t)^{n-1}}{t} \, dt.
\end{equation*}
On the other hand, we have 
\begin{equation*}
\sum_{{N{\mathfrak{a}}\leq x}}\frac{(\log{N{\mathfrak{a}}})^n }{{N{\mathfrak{a}}}}= \int_{1^-}^{x}\frac{\log^n t}{t}\, dN_K(t)=
 N_K(x)\frac{\log^n x}{x}+\int_{1^-}^{x}N_K(t)\left(\frac{\log^n t-n(\log t)^{n-1}}{t^{2}}\right) \, dt.
\end{equation*}
Thus, we get 
\begin{equation*}
\int_{1^-}^{x}N_K(t)\left(\frac{\log^n t-n(\log t)^{n-1}}{t^{2}}\right) \, dt=\\
\sum_{{N{\mathfrak{a}}\leq x}}\frac{(\log{N{\mathfrak{a}}})^n }{{N{\mathfrak{a}}}}
-N_K(x)\frac{\log^n x}{x}.
\end{equation*}
Again using the fact that $ N_K(t)=\gamma_{-1}(K)t+\mathcal{O}\left( t^{1-1/m}\right)$, we find 
\begin{equation*}
\int_{1^-}^{x}N_K(t)\left(\frac{\log^n t-n(\log t)^{n-1}}{t^{2}}\right) \, dt= 
\sum_{{N{\mathfrak{a}}\leq x}}\frac{(\log{N{\mathfrak{a}}})^n }{{N{\mathfrak{a}}}}
-\gamma_{-1}(K)\log^n x+\mathcal{O}\left( \frac{\log^n x}{x^{1/m}}\right).
\end{equation*}
Taking $x \rightarrow +\infty$, the above becomes 
\begin{equation}
\label{eq5}
I_1=
\lim_{x \rightarrow +\infty}\left[\sum_{{N{\mathfrak{a}}}\leq x}\frac{(\log{N{\mathfrak{a}}})^n }{{N{\mathfrak{a}}}}
-\gamma_{-1}(K)\log^n x\right].
\end{equation}
Now, notice that 
\begin{equation}
\label{eq6}
I_2=
\lim_{x \rightarrow +\infty}\left[\gamma_{-1}(K)\frac{(\log x)^{n+1}}{n+1}-\gamma_{-1}(K)\log^n x \right]
\end{equation}
From Eq \eqref{eq4} and \eqref{eq5} and\eqref{eq6}, we conclude that, for $n\geq 1$,
\begin{equation*}
\gamma_{n}(K)=\alpha_{n}(K)=
\frac{(-1)^n}{n!}\lim_{x \rightarrow \infty} \left( \sum_{N{\mathfrak{a}}\leq x} \frac{(\log N{\mathfrak{a})^n}}{N{\mathfrak{a}}}-\gamma_{-1}(K)\frac{(\log x)^{n+1}}{n+1}\right)
\end{equation*}
and $\gamma_{0}(K)=\alpha_0(K)+\gamma_{-1}(K)$. This completes the proof.
\end{proof}
\begin{proof}[Proof of Theorem \ref{Thm2}]
To prove Theorem~\ref{Thm2}, we apply the same technique used in \cite{MI}.  
Let $C$ be the set of all positive integers $n$ such that $ \gamma_{n}(K)\neq 0$. Define
\begin{equation*}
C_1=\left \{n : \quad \gamma_{n}(K)\neq 0 \text{ and} \ (-1)^n=1 \right\}
\end{equation*}
\begin{equation*}
C^{-}_1=\left \{n : \quad \gamma_{n}(K)< 0 \text{ and} \ (-1)^n=1 \right\},
\end{equation*}
\begin{equation*}
C^{+}_1=\left \{n : \quad \gamma_{n}(K)> 0 \text{ and} \ (-1)^n=1 \right\},
\end{equation*} 
and 
\begin{equation*}
C_2=\left \{n : \quad \gamma_{n}(K)\neq 0 \text{ and} \ (-1)^n=-1 \right\},
\end{equation*}
\begin{equation*}
C^{-}_2=\left \{n : \quad \gamma_{n}(K)< 0 \text{ and} \ (-1)^n=-1 \right\},
\end{equation*}
\begin{equation*}
C^{+}_2=\left \{n : \quad \gamma_{n}(K)> 0 \text{ and} \ (-1)^n=-1 \right\}.
\end{equation*}
From \cite{REI}, we have 
$$\zeta_K(s)-\frac{\gamma_{-1}(K)}{s-1}$$ 
is an entire \textit{transcendental} function. So the cardinal number of the set $C$ is equal to the cardinal number of the set of all positive integers $\aleph_0$. Then, we can write 
\begin{equation*}
\zeta_K(s)-\frac{\gamma_{-1}(K)}{s-1}=
\Bigl(\sum_{n\in C^{-}_1}+\sum_{n\in C^{+}_1}+\sum_{n\in C^{-}_2} +\sum_{n\in C^{+}_2}\Bigr)\gamma_{n}(K)(s-1)^n.
\end{equation*}
Replacing $s$ by $t+1$ and then by $-t+1$ in the above. Adding and then subtracting  the results, we find that 
\begin{equation}
\label{eq7}
\zeta_K(t+1)+\zeta_K(-t+1)=2\Bigl(\sum_{n\in C^{-}_1} +\sum_{n\in C^{+}_1}\Bigr) \gamma_{n}(K) t^n, 
\end{equation}
and 
\begin{equation}
\label{eq8}
\zeta_K(t+1)-\zeta_K(-t+1)-\frac{2\gamma_{-1}(K)}{t}=\\2\Bigl(\sum_{n\in C^{-}_2} +\sum_{n\in C^{+}_2} \Bigr)\gamma_{n}(K) t^n.
\end{equation}
Taking $ t=2m+1$ with $m>0$ and using the fact that the $\zeta_K(s)$ vanishes at all negative even integers. We find the left hand side of Eq~\eqref{eq7} approaches to $1$ when $m \rightarrow +\infty$. It follows that the right hand side of this equation can't be polynomial. Therefore the cardinal of the set $C_1$ is $ \aleph_0$. On the other hand, if we assume that the cardinal of the set $C^{-}_1$ is less than $\aleph_0$. Then the right hand side of Eq~\eqref{eq7} approaches $+\infty$. Similarly, if the cardinal of the set $C^{+}_1$ is less than $\aleph_0$. Then the right hand side of Eq~\eqref{eq7} approaches $-\infty$, this leads to a contradiction. We thus conclude that the cardinal of the sets $C^{-}_1$ and $C^{+}_1$ are $\aleph_0$. By a similar argument, we show that the cardinal of the sets $C^{-}_2$ and $C^{+}_2$ are $\aleph_0$. That completes the proof. 
\end{proof}

\section*{Acknowledgement} 
The author would like to thank Professor Kohji Matsumoto for his valuable comments on an earlier version
of this paper. The author is supported by the Japan
Society for the Promotion of Science (JSPS) `` Overseas researcher under Postdoctoral Fellowship of JSPS''. Part of this work was done while the author was supported by the Austrian Science
Fund (FWF) : Project F5507-N26, which is part
of the special Research Program  `` Quasi Monte
Carlo Methods : Theory and Application''.

\medskip\noindent {\footnotesize Graduate School of Mathematics, Nagoya University,
Furo-cho, Chikusa-ku, Nagoya, Aichi 464-8602, Japan.\\
e-mail: {\tt saad.eddin@math.nagoya-u.ac.jp}}

\end{document}